\documentclass[a4paper,11pt,twoside]{amsart}
\usepackage{amssymb}
\usepackage{amsthm}
\usepackage{amsmath}
\usepackage[pdftex,pdfpagelabels]{hyperref}
\hypersetup{}
\usepackage[nameinlink,noabbrev]{cleveref}
\usepackage{mathtools}
\usepackage{indentfirst}
\usepackage{verbatim}
\usepackage{environ}
\usepackage{float}
\usepackage[stretch=10,shrink=10]{microtype}
\usepackage{enumitem}
\usepackage{relsize}
\usepackage{graphicx}
\usepackage{tikz}
\usetikzlibrary{calc}
\usepackage{comment}
\usepackage{colonequals}
\usepackage[margin=1in]{geometry}
\usepackage{mathrsfs}
\usepackage{bbm}
\usepackage{float}
\usepackage{xcolor}
\usepackage{pgfplots}
\usepackage{caption}
\usepackage{doi}
\usepackage{url}
\pgfplotsset{compat=1.18}
\usepackage{hyperref}
\usepackage{booktabs}
\usepackage{multirow}
\usepackage{tabularx}
\usepackage{array}

\hypersetup{
  colorlinks=true,
  citecolor=black,
  linkcolor=black,
  urlcolor=black
}
\usepackage{cleveref}
\usepackage[numbers]{natbib}
\usepackage{fancyhdr}
\makeindex
\newtheorem{theorem}{Theorem}[section]
\newtheorem{lemma}[theorem]{Lemma}

\newtheorem*{example*}{Example}

\numberwithin{equation}{section}

\newcommand{\bb}{\mathbb}

\usepackage{lipsum}
\let\oldcenter\center   
\let\oldendcenter\endcenter
\renewenvironment{center}{\setlength\topsep{0pt}\oldcenter}{\oldendcenter}
\title{Efficient lifts into odd dimensions of Spherical $5$- and $7$-designs with Balanced Halves}

\author{Konstantin Delchev}
\address{Institute of Mathematics and Informatics,
Bulgarian Academy of Sciences, Sofia, Bulgaria}
\email{kdelchev@math.bas.bg} 

\author{Boryan Yordanov}
\address{American College of Sofia, Sofia, Bulgaria}
\email{boryan.g.yordanov@gmail.com}

\begin{document}

\begin{abstract}
We propose constructions of spherical $5$- and $7$-designs in odd dimensions by lifting balanced halves of even-dimensional designs to several layers of suitable latitudes. For every odd dimension, we obtain an explicit spherical $5$-design with at most $60(d^2-1)$ points, attaining the optimal quadratic order in the dimension. Assuming Zauner’s conjecture, this bound improves to $7.5(d-1)^2$. For every odd $d \geq 9$, we also construct a spherical $7$-design with $110M$ points from a complex projective $3$-design with $M$ points in $\mathbb{CP}^{(d-3)/2}$. 
\end{abstract}
\maketitle

\section{Introduction}

Spherical $\tau$-designs, finite sets on the unit sphere over which every harmonic polynomial of degree at most $\tau$ sums to zero, were introduced by Delsarte, Goethals and Seidel, who also established the fundamental lower bound on their cardinality
\cite{DelsarteGoethalsSeidel1977}.  

For a spherical design of strength $\tau$ and dimension $d$, this bound is
\[
|C|\geq \begin{cases}
\displaystyle \binom{d+e-1}{d-1}+\binom{d+e-2}{d-1}, & \tau=2e,\\[2mm]
\displaystyle 2\binom{d+e-1}{d-1}, & \tau=2e+1.
\end{cases}
\] 

This bound has order of $d^{\lfloor \tau/2\rfloor}$ and can be sharp, however tight designs, those attaining the bound, are extremely rare.  Bannai and Damerell proved severe restrictions on their possible strengths and dimensions \cite{BannaiDamerell1979,BannaiDamerell1980} with only certain small dimensions and the case $d=(2m+1)^2-2$ still open for $5$-designs. Further improvements of the lower bounds were made in a number of cases, notably through linear-programming methods of Boyvalenkov and collaborators \cite{BoyvalenkovNikova1998,BoyvalenkovDanevNikova1999, BoyvalenkovStoyanova2010}, but the gap between the available lower bounds and explicit constructions has remained large and, until recently, persisted even at the level of asymptotic order.

For spherical $5$-designs the Delsarte--Goethals--Seidel bound is of quadratic order in the dimension and  a number of earlier constructions achieved strong bounds in special families or under additional algebraic hypotheses; see, for example, the constructions of Kuperberg \cite{Kuperberg2006Codes} and of Mohammadpour and Waldron \cite{MohammadpourWaldron2024}.  More recently, Misawa gave explicit constructions of spherical $5$- and $7$-designs by means of tight fusion-frame and product constructions \cite{Misawa2026}.  Finally Arman et al. gave equal-weight spherical $5$-designs with $O(d^2)$ points in every dimension \cite{ArmanEtAl2026}.  In contrast, for spherical $7$-designs the Delsarte--Goethals--Seidel bound is cubic, and a general equal-weight construction with $O(d^3)$ points is still not proven to exist.

Layered constructions and different lifts have long played an important role in the theory of spherical designs. Bajnok developed systematic constructions obtained by decomposing spherical harmonics and placing lower-dimensional designs on suitable latitudes \cite{Bajnok1989,Bajnok1992TDesigns}, while Reznick gave a number of explicit constructions of spherical $5$-designs based on related decompositions and moment identities \cite{Reznick1995}.  Kuperberg's cubature constructions provide another general lifting perspective \cite{Kuperberg2006HatBox}.  There is also a complementary operation in the opposite direction: suitably chosen sections of a spherical design may be projected to produce lower-dimensional derived codes, already present in the work of Delsarte, Goethals and Seidel \cite{DelsarteGoethalsSeidel1977}. 

Starting points for the construction of spherical designs are diverse - graphs with exceptional spectral properties, codes and designs in discrete spaces, Grassmannian and complex projective constructions, and, classically, lattices. Modular-form methods show that shells of several highly symmetric lattices, and in some cases suitable unions of shells of a lattice and its rescaled dual, carry spherical designs of unexpectedly high strength \cite{HarpePacheVenkov2007}. These include a number of exceptional low-dimensional constructions which will serve as inputs to our lifting procedure.

Complex projective designs are an even more useful source. Via the Hopf fibration, a complex projective $t$-design in $\mathbb{CP}^{n-1}$, together with a suitable design (usually two-dimensional) on the fibres, can be lifted to a spherical design in $S^{2n-1}$, as demonstrated by Lindblad \cite{Lindblad2026}. In particular, projective $2$- and $3$-designs naturally give rise to spherical (5)- and (7)-designs, respectively. Of special interest are SIC-POVMs, the complex projective equivalent of maximal equiangular lines, which are thus $n^2$-point complex projective $2$-designs \cite{KlappeneckerRoetteler2005}. Most excitingly, Radchenko shared a seeming proof of the Zauner's conjecture (coming from a solution of Hilbert's 12th problem for real quadratic fields), which was obtained from an earlier raw AI-assisted work\cite{Radchenko2026Stark}.  While we believe the proof is correct, in the current paper we also mention the existing results independent of this result. 

A direct consequence from the hypothesis is the existence of a spherical $5$-design through Lindblad's construction with $(3/2)d^2$ points for every even dimension. In comparison, earlier in 2026 Arman, Radchenko et al. \cite{ArmanEtAl2026} provided a design with  $12d(d+2)$ points.

In this paper we provide a lift to an odd dimension for both cases, that will give a $5$-design with $(15/2)(d-1)^2$ and $60(d^2-1)$ points respectively. Earlier lift from \cite{ArmanEtAl2026} gives $72(d^2-1)$.

Our starting point is inspired by Bannai et al. \cite{ZhuBannaiEtAl2017, BannaiZhaoEtAl2018} - designs of harmonic indices $\{1,2,4\}$ obtained as balanced halves from a $5$-design. Both SIC-POVMs and the results of Arman et al. can supply such a design in an even-dimensional space, a construction which places a regular hexagon on each complex projective fiber \cite{ArmanEtAl2026, Lindblad2026}. Taking alternating vertices on each hexagon decomposes the design into two balanced halves, which we then lift as five antipodal copies on five pairs of latitudes. Although there is no general construction method for $O(d^3)$ even-dimensional $7$-designs, we describe a similar lift for $7$-designs in odd dimensions.

The paper is organized as follows - we present the two constructions first, proving their design properties conditional on the existence of a solution of two systems of equations. We then give the solutions of those two systems, which are mostly technical in nature. Finally we give tables with the best constructions for lower dimensions to our knowledge.

\section{Main Results}

\subsection{Construction of $5$-designs}

\begin{theorem}
    For every odd integer $d\geq25$ there exists an explicit equal-weight spherical $5$-design $C_d\subset S^{d-1}\subset\bb{R}^{d}$ with 
    \[
    |C_d|\leq60(d^2-1).
    \]
    If Zauner's conjecture for the existence of SIC-POVMs holds, there exists an equal-weight spherical $5$-design $C_d\subset S^{d-1}\subset\bb{R}^{d}$ with 
    \[
    |C_d|\leq\frac{15}{2}(d-1)^2.
    \]
\end{theorem}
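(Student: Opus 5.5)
Write $d=2n+1$. The plan is to lift a balanced-halves $5$-design from $S^{2n-1}\subset\mathbb{R}^{2n}$ to $S^{d-1}$, placing copies at several latitudes.

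\textbf{Input design.} We start from an equal-weight $5$-design $X\subset S^{2n-1}$ with $6M$ points. It comes from a complex projective $2$-design with $M$ points in $\mathbb{CP}^{n-1}$, with a regular hexagon placed on each Hopf fibre. For this input we use the construction of Arman et al.\ (for $n$ large enough, giving the threshold $d\ge 25$), or a SIC-POVM with $M=n^2$ if Zauner's conjecture holds. Taking alternating vertices of each hexagon splits $X=X_+\cup X_-$ with $X_-=-X_+$. Each half $X_+$ has $3M$ points. Averaging over a triangle on the fibre kills the fibre-characters $e^{ik\theta}$ for $k\not\equiv 0 \pmod 3$. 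Combined with the projective $2$-design property, this should show that $X_+$ is a design of harmonic indices $\{1,2,4\}$, i.e.\ a balanced half in the sense of Bannai et al. It is not a $3$- or $5$-design, since cubic and quintic harmonics may survive.

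\textbf{Lift.} We write points of $S^{d-1}$ as $(\sqrt{1-h^2}\,x,\,h)$ with $x\in S^{2n-1}$ and $h\in[-1,1]$. We choose latitudes $h_1,\dots,h_5>0$ and set
\[
C_d=\bigcup_{j=1}^{5}\Bigl(\{(\sqrt{1-h_j^2}\,x,h_j):x\in X_+\}\cup\{(\sqrt{1-h_j^2}\,x,-h_j):x\in X_-\}\Bigr),
\]
which is antipodal. Hence $|C_d|=10\cdot 3M=30M$. This gives $30n^2=\tfrac{15}{2}(d-1)^2$ under Zauner, and $60(d^2-1)$ when $M=2n(n+1)$ as in Arman et al. Antipodality kills all odd-degree harmonics, so only degrees $2$ and $4$ remain.

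\textbf{Moment conditions.} By the Gegenbauer/zonal decomposition, every harmonic of degree $k$ on $S^{d-1}$ is a sum of terms $h^{m}(1-h^2)^{\ell/2}$ multiplied by a harmonic $Y_\ell(x)$ on $S^{2n-1}$, with $\ell\le k$ and $k-\ell$ even. For $k\in\{2,4\}$ we need $\ell\in\{0,2,4\}$ (these terms vanish by the index-$\{2,4\}$ property of $X_\pm$) and $\ell\in\{1,3\}$. The odd-$\ell$ terms pair with odd powers of $h$. Because $X_-=-X_+$, the contributions from $(x,h)$ and $(-x,-h)$ agree, so these terms do not cancel automatically. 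The index-$1$ property handles $\ell=1$. For $\ell=3$ there is no vanishing, so we need $\sum_j h_j(1-h_j^2)^{3/2}=0$ in the relevant weighted form. This cannot hold with all $h_j>0$, so we must instead allow signed latitudes: each layer can be placed with either $X_+$ or $X_-$ on top, which is equivalent to letting $h_j$ take either sign.

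The remaining $\ell=0$ conditions are the radial moment equations. Writing $u_j=h_j^2$, they read
\[
\tfrac15\sum_j u_j=\tfrac1d,\qquad \tfrac15\sum_j u_j^2=\tfrac{3}{d(d+2)},
\]
together with the $\ell=2$ mixed terms and the odd condition $\sum_j \varepsilon_j h_j(1-h_j^2)^{3/2}=0$. So the theorem reduces to finding real solutions of a small polynomial system in five unknowns with $|h_j|<1$, valid for all odd $d\ge25$.

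\textbf{Main obstacle.} The hard step is proving that this system has a solution for every odd $d\ge 25$. I would first substitute $h_j=c_j/\sqrt d$, expand asymptotically in $1/d$, and find a nondegenerate limiting solution. I would then use the implicit function theorem, or an explicit monotonicity/intermediate-value argument, to continue it for all $d\ge25$. The finitely many small cases near the threshold would be checked numerically with interval bounds.
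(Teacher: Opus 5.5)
Your construction and reduction match the paper's. The alternating-vertex half $X_+$ has harmonic indices $\{1,2,4\}$; the paper gets this more directly, since even-degree harmonics sum over $X_+$ to half their sum over the $5$-design $X$, and each fibre triangle sums to $0$. The five antipodal layers carry signs $\delta_i$, and everything reduces to $\sum_i u_i^2=5/d$, $\sum_i u_i^4=15/(d(d+2))$, $\sum_i\delta_i u_i(1-u_i^2)^{3/2}=0$. One small correction: the $\ell=0$ terms are not killed by the index property; they \emph{are} the two radial equations. The $\ell=2$ terms impose no extra condition.

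The genuine gap is that you never prove this system is solvable for every odd $d\ge 25$. That is the only place the hypothesis $d\ge25$ enters the theorem. The route you sketch does not close it as stated. With five unknowns and three equations, you must first fix a sign pattern and two degrees of freedom to get a square system. An implicit function theorem at $1/d=0$ then only gives solutions for $d\ge d_0$ with an ineffective $d_0$. So the ``finitely many small cases'' are not even identified unless you make the continuation quantitative.

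The paper closes this with a one-parameter reduction that works uniformly in $d$. Put $v_i=\sqrt{d-1}\,u_i$ and take signs $(1,1,1,1,-1)$. Set $v_1,v_2,v_3=a-\epsilon,a,a+\epsilon$ with $\epsilon=1/100$ and $a\in[1/50,7/20]$. Then $b^2=v_4^2$ and $c^2=v_5^2$ are the roots of a quadratic determined by the two moment equations. The bounds $4<s(a)<5$, $D(a)>64/25$, $p(a)>13/4$ hold for all $d\ge25$ by monotonicity in $a$ and in $d$. They give $c>b>1/2>a+\epsilon$, so the layers are distinct, which you also need for $|C_d|$ to count distinct points. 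They also give $c<\sqrt{d-1}$.

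Now let $g(t)=t(1-t^2/(d-1))^{3/2}$ and $F_d(a)=g(a-\epsilon)+g(a)+g(a+\epsilon)+g(b)-g(c)$. At $a=1/50$, $F_d$ is negative, because $c-b>1/2$ and $g'>7/48$ on $[b,c]$. At $a=7/20$, $F_d$ is positive, because $c-b<51/50$, $g'<1$, and the first three terms exceed $2079/2000$. The intermediate value theorem finishes, with no numerics.

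There are two further slips. First, the design of Arman et al.\ in $\mathbb{R}^{2n}$ has $12\cdot 2n(2n+2)=48n(n+1)$ points, so $M=8n(n+1)$. Your $M=2n(n+1)$ would give $30M=15(d^2-1)$, not $60(d^2-1)$. Second, the threshold $d\ge25$ comes from the latitude system, not from the availability of the input design.
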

\begin{proof}
Let us first describe the candidate design. We start from one half of the even-dimensional design in \cite{ArmanEtAl2026}, where $Y\subset S^{d-2}$ is chosen so that it is a balanced set and we form several antipodal latitude copies of $Y$ in $S^{d-1}$.  More precisely, for each of the five latitude parameters $u_i\in(0,1)$ and signs $\delta_i\in\{\pm1\}$, we include the points
\[
\pm(\sqrt{1-u_i^2}y,\delta_iu_i), \qquad y\in Y.
\]
Thus $u_i$ determines the absolute height of the $i$-th layer in the new coordinate, while $\delta_i$ determines which half, $Y$ or $-Y$, is placed at the positive latitude.  The resulting set is automatically antipodal, and we will show that the parameters $u_i$ and $\delta_i$ can be chosen so that the required second- and fourth-degree moments in the new coordinate are correct and the remaining cubic contribution of $Y$ cancels between the layers. A set $C_d$ is a $\tau$-design if
\[
\sum_{x\in C_d} h(x)=0
\]
for every $h\in \operatorname{Harm}_{d}(1)\oplus\dots\oplus\operatorname{Harm}_{d}(\tau)$. Since the original design
\[
X=Y\sqcup(-Y)\subset S^{d-2}
\]
is a spherical $5$-design, every harmonic polynomial of even degree $2$ or $4$
has zero sum already on $Y$. Together with
\[
\sum_{y\in Y}y=0,
\]
this means that $Y$ has harmonic indices $1,2,4$. The final set $C_d$ is antipodal, so all harmonic moments of odd degree vanish
automatically. It therefore remains only to check degrees $2$ and $4$. We impose the standard latitude equations
\[
\frac{1}{5}\sum_{i=1}^5u_i^2=\frac1d,
\qquad
\frac{1}{5}\sum_{i=1}^5u_i^4=\frac{3}{d(d+2)},
\]
as in \cite{ArmanEtAl2026}. Let $\mathcal B_j$ be a basis of $\operatorname{Harm}_{d-1}(j)$. We can use the standard recursive construction of spherical harmonics by
Gegenbauer polynomials \cite{Bajnok1989,HunzikerSepanskiStanke2012}, and writing 
\[
z=\sqrt{1-u_i^2}y, \qquad x=(z,\delta_iu_i)\in\bb R^{d-1}\oplus\bb R, 
\qquad
\|x\|^2=\|z\|^2+u_i^2,
\]
we then evaluate the corresponding basis of $\operatorname{Harm}_d(2)$ on each layer to obtain
\[
u_i^2-\frac{\|x\|^2}{d},
\qquad
\delta_iu_iH_1(z)\quad(H_1\in\mathcal B_1),
\qquad
H_2(z)\quad(H_2\in\mathcal B_2).
\]
The first term vanishes on $C_d$ by the quadratic latitude equation, while the other two vanish because $Y$ has harmonic indices $1$ and $2$. Similarly, for the corresponding  basis of $\operatorname{Harm}_d(4)$ on each layer we obtain 
\begin{gather*}
u_i^4-\frac{6}{d+4}u_i^2\|x\|^2+\frac{3}{(d+2)(d+4)}\|x\|^4,\qquad
\left(\delta_iu_i^3-\frac{3}{d+4}\delta_iu_i\|x\|^2\right)H_1(z),
\ H_1\in\mathcal B_1,\\
\left(u_i^2-\frac{1}{d+4}\|x\|^2\right)H_2(z),
\> H_2\in\mathcal B_2,\qquad
\delta_iu_iH_3(z),
\> H_3\in\mathcal B_3,
\qquad H_4(z), \>H_4\in\mathcal B_4. 
\end{gather*}
The first term vanishes by the two latitude equations, and the terms
containing $H_1$, $H_2$, and $H_4$ vanish because $Y$ has harmonic
indices $1,2,4$. Thus only $\delta_iu_iH_3(z)$ remains. Since $H_3$ is homogeneous of degree $3$,
\[
\delta_i u_iH_3(z)
=
\delta_i u_i(1-u_i^2)^{3/2}H_3(y).
\]
Hence this harmonic moment vanishes for every
$H_3\in\operatorname{Harm}_{d-1}(3)$ provided
\[
\sum_{i=1}^5
\delta_i u_i(1-u_i^2)^{3/2}=0.
\]
Therefore, it remains to find five distinct latitudes satisfying
\[
\sum_{i=1}^5u_i^2=\frac{5}{d},\qquad
\sum_{i=1}^5u_i^4=\frac{15}{d(d+2)},\qquad
\sum_{i=1}^5
\delta_i u_i(1-u_i^2)^{3/2}=0.
\]
We leave the remaining proof in Lemma~\ref{lem:five-design-parameters} which supplies the five distinct latitude parameters and the five signs. The solution is purely technical and finishes the argument for the  existence of the $5$-design with  $60(d^2-1)$ since the construction which is lifted, the half $Y$ has cardinality $6(d^2-1)$ \cite{ArmanEtAl2026}.
\end{proof}

\subsection{Construction of $7$-Designs}

In the case of $\tau=7$ we do not have a convenient starting construction in every even dimension akin to the construction presented in \cite{ArmanEtAl2026} or the SIC-POVMs. We do present a similar result, though, starting from a complex projective design, such as the one, proposed by K{\"o}nig \cite{Konig1999}. We replace the octagon in the circle fiber with a decagon in order to obtain a stronger structure and take the antipodal half $Y\subset S^{d-2}$ which is a $4$-design with a constant $6$ degree moment. We eschew the requirement for all latitudes to be different, deciding to opt for suitable rotations on the same latitude. Thus we place eleven copies of $Y$ on three different latitudes at varying orientations where nine of them are on a single latitude and the remaining two are placed at two distinct latitudes. 

\begin{theorem}
Let $d \geq 9$ be odd, and suppose there exists in  $\mathbb{CP}^{(d-3)/2}$ an equal-weight complex projective 3-design with $M$ points. Then there is a spherical 7-design in $S^{d-1}$ with exactly $110M$ points.
\end{theorem}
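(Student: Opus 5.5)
Set $n=(d-1)/2$, so $\mathbb{C}^n\cong\mathbb{R}^{d-1}$, and let $P=\{[w_1],\dots,[w_M]\}\subset\mathbb{CP}^{n-1}$ be the given equal-weight projective $3$-design, with unit representatives $w_j\in S^{2n-1}=S^{d-2}$. The plan follows the proof of the $5$-design theorem: build an even-dimensional object from $P$, take a balanced half $Y$, lift copies of $Y$ to latitudes, and check the harmonic moments via the Gegenbauer decomposition.

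\textbf{Step 1: the starting set and its half.} Place a regular decagon on each Hopf fibre:
\[
X=\{e^{2\pi i k/10}w_j:\ 1\le j\le M,\ 0\le k<10\},
\]
which has $10M$ points. A projective $3$-design controls all bi-degree $(a,b)$ moments with $a,b\le 3$, while the $10$-gon kills every $(a,b)$ with $a\neq b$ and $|a-b|<10$. Hence, by Lindblad's lifting argument \cite{Lindblad2026}, $X$ is a spherical $7$-design in $S^{d-2}$. Now let $Y$ consist of the points with even $k$, i.e.\ a regular pentagon on each fibre, so $|Y|=5M$ and $X=Y\sqcup(-Y)$.

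Only the $(a,b)$ components with $a-b\equiv0\pmod 5$ survive on $Y$. With $a+b\le 7$ these are $a=b$ together with $(5,0)$, $(0,5)$, $(6,1)$ and $(1,6)$. Consequently $Y$ annihilates $\operatorname{Harm}_{d-1}(j)$ for $j=1,2,3,4$. In degree $6$ only the $(3,3)$ part is controlled, so $Y$ is not a $6$-design; the paper's phrase ``constant degree $6$ moment'' should be read as saying that the residual degree-$5$ and degree-$6$ harmonic moments of $Y$ are explicit (coming from $\operatorname{Re}/\operatorname{Im}$ of $z^{\alpha}$ with $|\alpha|=5$, and similarly in degree $6$). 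The key observation is that these residual moments change under the rotation $w\mapsto e^{i\theta}w$ by a phase, namely $e^{\pm5i\theta}$ for the $(5,0)$ and $(6,1)$ components, whereas the $(3,3)$ part is rotation invariant.

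\textbf{Step 2: the lift.} Take eleven copies $\pm(\sqrt{1-u_m^2}\,e^{i\theta_m}Y,\ \delta_m u_m)$, giving $11\cdot 2\cdot 5M=110M$ points. The set is antipodal, so only degrees $2$, $4$, $6$ need checking. Decompose $\operatorname{Harm}_d(2s)$ along the last coordinate as in the previous proof. A term $g_{j}(u)\,H_j(z)$ with $H_j\in\operatorname{Harm}_{d-1}(j)$ and $j\le4$ vanishes on each layer. This leaves three families of conditions.

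\begin{itemize}
\item[(i)] Zonal ($j=0$) terms: the weighted moment equations
\[
\tfrac1{11}\sum_m u_m^{2}=\tfrac1d,\qquad \tfrac1{11}\sum_m u_m^{4}=\tfrac{3}{d(d+2)},\qquad \tfrac1{11}\sum_m u_m^{6}=\tfrac{15}{d(d+2)(d+4)}.
\]
\item[(ii)] $j=5$ (degree $6$ only): the contribution $\delta_m u_m(1-u_m^2)^{5/2}H_5(y)$, which must sum to zero. Since $H_5$ only sees the $e^{\pm5i\theta}$ parts, this amounts to
\[
\sum_m \delta_m u_m(1-u_m^2)^{5/2}e^{5i\theta_m}=0.
\]
\item[(iii)] $j=6$: the contribution $(1-u_m^2)^3H_6(y)$. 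The $(3,3)$ part of $H_6$ already vanishes on $Y$ by the $3$-design property (harmonic of bidegree $(3,3)$ averages to zero over a projective $3$-design), and the $(6,1)$ part picks up $e^{\pm5i\theta}$. So it suffices that
\[
\sum_m (1-u_m^2)^3e^{5i\theta_m}=0.
\]
\end{itemize}

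\textbf{Step 3: solving the system.} Follow the announced configuration: nine copies at a common height $u_0$ and two copies at heights $u_1\neq u_2$.

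\begin{itemize}
\item For (iii), take the nine rotation angles $5\theta_m$ equally spaced modulo $2\pi$, so the common-height copies cancel among themselves. The two remaining copies then need $(1-u_1^2)^3e^{5i\theta_{10}}+(1-u_2^2)^3e^{5i\theta_{11}}=0$. Since $u_1\neq u_2$, I would instead fold them into a weighted balancing with the nine: choose angles so that the eleven complex numbers close into a polygon, which is possible because the nine equal terms dominate.
\item For (ii), choose the signs $\delta_m$ (nine copies with alternating contributions, plus the free phases) so that the analogous sum closes as well. Having both the sign and the angle free gives enough freedom for two complex conditions.
\item The three real equations in (i) in the unknowns $u_0,u_1,u_2$ are the analogue of Lemma~\ref{lem:five-design-parameters}. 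Writing $v=u^2$, this is a three-point quadrature problem with weights $9,1,1$ for the moments of the distribution of $x_d^2$. Existence of a solution in $(0,1)$ for $d\ge9$ follows from a continuity/intermediate-value argument, starting from asymptotics in which $v\sim c/d$.
\end{itemize}

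\textbf{Main obstacle.} I expect this to be the simultaneous solvability of (i)–(iii) with real $u$'s in $(0,1)$ uniformly for all odd $d\ge9$. Two points are delicate: the quadrature problem with fixed unequal weights $9{:}1{:}1$ may only admit solutions for $d$ above a threshold, which presumably is the reason for the hypothesis $d\ge9$; and the phase-closing conditions must remain compatible with the $u$'s so obtained. I would first solve (i) explicitly via the associated cubic, then verify the triangle inequalities needed to close the phase polygons in (ii) and (iii).
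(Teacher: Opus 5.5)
Your construction matches the paper's: decagons on the Hopf fibres, the pentagon half $Y$, nine copies at one latitude plus two more, antipodal closure, and Gegenbauer splitting along the last coordinate. However, Step~1 contains a degree-bookkeeping error that then derails Step~3. In degree $6$ the bidegrees are $(a,b)$ with $a+b=6$, and the only one with $a\equiv b\pmod 5$ is $(3,3)$. The component $(6,1)$ you invoke in (iii) has total degree $7$, where antipodality of $C_d$ already kills everything.

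Hence $Y$ annihilates $\operatorname{Harm}_{d-1}(6)$. The simplest reason is that $X=Y\sqcup(-Y)$ is a $7$-design, so every even harmonic $h$ satisfies $\sum_{Y}h=\tfrac12\sum_X h=0$; this is exactly how the paper argues. Condition (iii) is therefore spurious, and it is precisely what breaks your phase step. With the nine phases equally spaced, (iii) leaves two terms of unequal modulus, as you observe. Your fallback (``the eleven complex numbers close into a polygon because the nine equal terms dominate'') is an unproved claim about closing two polygons with the same angles but different side lengths, $u_m(1-u_m^2)^{5/2}$ and $(1-u_m^2)^3$.

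Your degree-of-freedom count is also off. Since $\pm(z,-u)=\mp(-z,u)$ and $-z$ is $z$ rotated by $\pi$, flipping $\delta_m$ merely replaces $e^{5i\theta_m}$ by $-e^{5i\theta_m}$; it is not an extra parameter.

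After dropping (iii), what remains is the single complex equation $\sum_m u_m(1-u_m^2)^{5/2}e^{5i\theta_m}=0$ together with the moment equations (i). The real content is showing these are solvable for \emph{every} odd $d\ge 9$, and your Step~3 only asserts this; an asymptotic $v\sim c/d$ heuristic does not give it for each $d$. The paper's Lemma~\ref{lem:seven-design-parameters} supplies it in two parts.

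For the latitudes, it sets $a=\sqrt d\,u_1$, $b=\sqrt d\,u_2$, $c=\sqrt d\,u_3$ and eliminates $b^2,c^2$ as roots of a quadratic in terms of $a$. It then applies the intermediate value theorem to the sextic residual $F_d(a)$ on $[0.66,0.74]$, proving $F_d(0.74)>0>F_d(0.66)$ for all odd $d\ge9$ via explicit monotonicity in $d$.

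For the phase, it takes $5\theta_k=\pi+kt$, $k=-4,\dots,4$, on the nine copies and angle $0$ on the other two, so the imaginary part cancels by symmetry. With $g(u)=u(1-u^2)^{5/2}$, the real part $g(u_1)\sum_k\cos(\pi+kt)+g(u_2)+g(u_3)$ behaves as follows:
\begin{itemize}
\item it is positive at $t=2\pi/9$, since the nine cosines sum to $0$;
\item it is negative at $t=0$, because $9u_1>u_2+u_3$ (Cauchy--Schwarz from $9a^2+b^2+c^2=11$ and $a\ge0.66$), while $u_1<u_2<u_3$ makes $(1-u^2)^{5/2}$ largest at $u_1$.
\end{itemize}
These quantitative steps, and the ordering they depend on, are what your proposal is missing.
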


\begin{proof}
We label the latitudes with $u_i$ and the rotations 
    \[
    r(\theta_k)=
    \begin{pmatrix}
\cos\theta_k\, I_{\frac{d-1}{2}} & -\sin\theta_k\, I_{\frac{d-1}{2}} \\
\sin\theta_k\, I_{\frac{d-1}{2}} &  \cos\theta_k\, I_{\frac{d-1}{2}} 
\end{pmatrix} \in O(d-1).
    \]
    The chosen angle of rotation is $
    \theta_k=\frac{\pi +kt}{5}$ so as to match the fifth degree harmonic moment of $Y$ where $k=-4,\dots, 4,$ and $t$ determines how far apart the copies are from each other. The points that result are 
\[
\pm(\sqrt{1-u_i^2}yr(\theta_k),u_i),
\]
where the angle of rotation is $0$ for the latitudes $u_2$ and $u_3$.
The set we obtain is determined by the three possible absolute values of the latitude at which $Y$ lies and its nine rotations on the lowest such. With the aim of making the code antipodal, we also include the points of opposite sign to our initial. Consequently, it is implied that the odd moment equations hold and through certain choice of $r(\theta_k)$ at latitude $u_1$ the even moment equations too hold. We proceed to work with the harmonic description of a spherical $7$-design. Naturally, $Y$ satisfies the even moments $2,4$, and $6$, because it follows from the $7$-design
\[
X=Y\sqcup(-Y)\subset S^{d-2}.
\]
Furthermore it also satisfies the harmonic moment equations for $1$ and $3$. The construction consists of disjoint regular pentagons which are spherical $4$-designs (on the circle), so every homogeneous polynomial of degree $1$ or $3$ has zero sum on each pentagon. The antipodal points of $C_d$ leave only the $2,4$ and $6$ degree moments to be checked. We require the latitude equations to hold:
\[
\frac{1}{11}\sum_{i=1}^{11}u_i^2=\frac{1}{d},
\qquad
\frac{1}{11}\sum_{i=1}^{11}u_i^4=\frac{3}{d(d+2)}, \qquad \frac{1}{11}\sum_{i=1}^{11}u_i^6=\frac{15}{d(d+2)(d+4)}.
\]
We proceed with the proof as in the previous section. Again we denote $\mathcal B_j$ to be the basis $\operatorname{Harm}_d(j)$ and choose
\[
x=(z,u_i)\in C_d, \qquad\|x\|^2=\|z\|^2 + u_i^2
\]
for such
\[
z\in \{\sqrt{1-u_i^2}yr(\theta_k), \sqrt{1-u_i^2}y\}\subset \bb{R}^{d-1}.
\]
Through a standard recursive construction of spherical harmonics by Gegenbauer polynomials we evaluate the basis of $\operatorname{Harm}_d(2)$:
\[
u_i^2-\frac{\|x\|^2}{d},
\qquad
u_iH_1(z)\quad(H_1\in\mathcal B_1),
\qquad
H_2(z)\quad(H_2\in\mathcal B_2).
\]
Therefore, for $\operatorname{Harm}_d(4)$
\begin{gather*}
u_i^4-\frac{6}{d+4}u_i^2\|x\|^2+\frac{3}{(d+2)(d+4)}\|x\|^4,\qquad
\left(u_i^3-\frac{3}{d+4}u_i\|x\|^2\right)H_1(z),
\ H_1\in\mathcal B_1,\\
\left(u_i^2-\frac{1}{d+4}\|x\|^2\right)H_2(z),
\> H_2\in\mathcal B_2,\qquad
u_iH_3(z),
\> H_3\in\mathcal B_3,
\qquad H_4(z), \>H_4\in\mathcal B_4. 
\end{gather*}
 Following the same method applied for the $5$-design construction, we obtain the basis of $\operatorname{Harm}_d(6)$,
\begin{align*}
&u_i^6-\frac{15}{d+8}u_i^4\|x\|^2
+\frac{45}{(d+6)(d+8)}u_i^2\|x\|^4
-\frac{15}{(d+4)(d+6)(d+8)}\|x\|^6,\\
&\left(
 u_i^5-\frac{10}{d+8}u_i^3\|x\|^2
+\frac{15}{(d+6)(d+8)} u_i\|x\|^4
\right)H_1(z),
\qquad H_1\in\mathcal B_1,\\
&\left(
u_i^4-\frac{6}{d+8}u_i^2\|x\|^2
+\frac{3}{(d+6)(d+8)}\|x\|^4
\right)H_2(z),
\qquad H_2\in\mathcal B_2,\\
&\left(
 u_i^3-\frac{3}{d+8} u_i\|x\|^2
\right)H_3(z),
\qquad H_3\in\mathcal B_3,\quad
\left(
u_i^2-\frac{1}{d+8}\|x\|^2
\right)H_4(z),
\qquad H_4\in\mathcal B_4,\\
& u_iH_5(z),
\qquad H_5\in\mathcal B_5,
\qquad
H_6(z),
\qquad H_6\in\mathcal B_6.
\end{align*}
The first basis harmonic polynomial vanishes after the evaluation of the even moment latitude equations. The basis is highly dependent on the harmonic moment equations of $Y$. Hence, all terms vanish except the fifth degree ones:
\[
u_i H_5(z)=u_i(1-u_i^2)^{5/2}H_5(r(\theta_k)y).
\]
We again recall that $Y$ is a disjoint union of regular pentagons. Each of them is a $4$-design itself (on the circle) so only the fifth-degree terms remain. Since the nine copies at latitude $u_1$ have rotations $\theta_k$, $k=-4,\ldots,4$, while the copies at $u_2$ and $u_3$ are not rotated, the required equations are
\[
u_1(1-u_1^2)^{5/2}\sum_{k=-4}^{4}\cos(5\theta_k)
+u_2(1-u_2^2)^{5/2}
+u_3(1-u_3^2)^{5/2}=0,
\]
\[
u_1(1-u_1^2)^{5/2}\sum_{k=-4}^{4}\sin(5\theta_k)=0.
\]
Since $\sin$ is an odd function, the latter follows immediately. Once again we leave the solution of the required system to a lemma in the technical section. Lemma~\ref{lem:seven-design-parameters} provides the three latitude parameters and rotations that give a solution and proves that the construction is a valid $7$-design with exactly $22|Y|=110M$ points.
\end{proof}

\section{Technical lemmas}

\begin{lemma}\label{lem:five-design-parameters}
For every odd integer $d\geq25$, there exist five distinct latitude parameters
$0<u_1<u_2<u_3<u_4<u_5<1$ with a sign pattern
$(1,1,1,1,-1)$, such that
\[
\sum_{i=1}^5u_i^2=\frac{5}{d},\qquad
\sum_{i=1}^5u_i^4=\frac{15}{d(d+2)},\qquad
\sum_{i=1}^5\delta_i u_i(1-u_i^2)^{3/2}=0.
\]
\end{lemma}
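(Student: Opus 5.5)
The plan is to rescale the latitudes to the natural order $u_i\sim 1/\sqrt d$ and prove existence by a continuity (intermediate value) argument on a one-parameter family. The sign pattern $(1,1,1,1,-1)$ turns the cubic condition into
\[
f(u_5)=f(u_1)+f(u_2)+f(u_3)+f(u_4),\qquad f(u)=u(1-u^2)^{3/2}.
\]
Since $f'(u)=(1-u^2)^{1/2}(1-4u^2)>0$ on $(0,1/2)$, $f$ is strictly increasing on the relevant range. Put $u_i=s_i/\sqrt d$. The conditions become
\[
\sum s_i^2=5,\qquad \sum s_i^4=\frac{15d}{d+2},\qquad g_d(s_5)=\sum_{i\le 4}g_d(s_i),
\]
where $g_d(s)=s(1-s^2/d)^{3/2}=s+O(s^3/d)$. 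In the limit $d\to\infty$ the system reads $s_5=s_1+s_2+s_3+s_4$ with the first two moments fixed, and we solve that model problem first.

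\textbf{Limit model.} Write $q=s_5^2$ and $A=\sum_{i\le4}s_i$, $B=\sum_{i\le 4}s_i^2=5-q$. By Cauchy--Schwarz $A^2\le 4B$. When $q=4$ this forces $s_1=\dots=s_4=1/2$, and then $\sum s_i^4=16+4/16=16.25>15$. When $q=3$ we have $\sum_{i\le4}s_i^4\le B^2=4$, so $\sum s_i^4\le 13<15$. Hence along a path in $q$ from $3$ to $4$, with the first four values chosen continuously, the fourth moment passes through $15$.

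To keep the four values distinct, I would use a concrete two-parameter profile, for instance $s_i=a+(i-\tfrac52)b$ for $i\le4$. The constraints $A=\sqrt q$ and $B=5-q$ then determine $a,b$ as explicit functions of $q$, with $b>0$ for $q<4$. Along this path I would locate a root $q^\ast\in(3,4)$ of $\Phi_\infty(q)=\sum s_i^4-15$, at which the values are distinct and $s_4<s_5$. I would also check that $\Phi_\infty$ changes sign transversally there, i.e.\ that $\Phi_\infty'(q^\ast)\ne0$ or that the sign change is at least strict.

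\textbf{Finite $d$.} For finite $d$ I keep the same profile but impose the exact cubic condition. For fixed $q$, the second moment and the equation $g_d(s_5)=\sum g_d(s_i)$ determine $a,b$ and $s_5$, and these depend continuously on $(q,1/d)$. This is an implicit function step near the limit solution, where the Jacobian is nondegenerate because $b>0$. The fourth-moment target $15d/(d+2)$ differs from $15$ by $O(1/d)$, and the perturbation from $g_d$ is $O(s^3/d)$ with all $s_i\le 2$.

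I would then evaluate the finite-$d$ function $\Phi_d$ at two fixed endpoints, say $q=3.2$ and $q=3.95$. The claim is that, with explicit error bounds valid for $d\ge25$, $\Phi_d$ keeps opposite signs at these endpoints. The intermediate value theorem then gives a root. Undoing the scaling, all $u_i\le 2/\sqrt{25}<1/2$, so $u_i\in(0,1)$, $f$ is monotone, and the ordering is preserved.

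\textbf{Main obstacle.} The hard step is making the $O(1/d)$ perturbation bounds explicit enough to reach $d=25$. I also have to guarantee that the implicitly defined profile stays distinct and positive along the whole interval: near $q=4$ we have $b\to0$, so the endpoint must stay away from $4$. I would try first to bound $|g_d(s)-s|\le \tfrac32 s^3/d$ and propagate this linearly through the explicit formulas for $a,b,s_5$. If the resulting margin is too thin for small $d$, I would fall back on a direct numerical check of the endpoint signs for each odd $d$ in a finite range $25\le d\le D_0$, and use the asymptotic bound for $d>D_0$.
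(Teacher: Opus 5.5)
\textbf{The gap is your choice of one-parameter family.} Your reduction and your limit model are correct. For sufficiently large $d$ the perturbation argument would go through if both endpoints are taken inside $(3.6,4)$. The reason is that in the limit $s_1=\tfrac{\sqrt q}{4}-\tfrac32\sqrt{1-q/4}>0$ only for $q>3.6$, so your lower endpoint $q=3.2$ already has $s_1<0$ there.

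The real problem is that the equally spaced profile $s_i=a+(i-\tfrac52)b$ contains no admissible point at $d=25$. So no sharper error bounds and no finite numerical check can rescue it. To see this:
\begin{itemize}
\item At $d=25$ the fourth-moment equation reads $q^2+\sum_{i\le4}s_i^4=125/9$. Together with $\sum_{i\le4}s_i^4\ge(5-q)^2/4$ this forces $q\le 11/3$.
\item Since $g_{25}$ is increasing on $[0,5/2]$, this gives $g_{25}(s_5)\le g_{25}(\sqrt{11/3})<1.51$.
\item Positivity $s_1=a-\tfrac32 b>0$ gives $5-q=4a^2+5b^2<\tfrac{56}{9}a^2$, so $a>\sqrt{3/14}>0.46$.
\item Using $(1-x)^{3/2}\ge1-\tfrac32x$ and $\sum_{i\le4}s_i^3=4a^3+15ab^2<\tfrac{32}{3}a^3$, you get $\sum_{i\le4}g_{25}(s_i)\ge4a-0.64a^3>1.78$.
\end{itemize}
Hence the cubic condition fails on the entire positive part of your family, and any sign change of $\Phi_{25}$ you detect lies where $s_1<0$. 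Your intermediate value argument on $\Phi_d$ controls the fourth moment but not positivity. You only guarded against degeneration at the other end, where $b\to0$ near $q=4$.

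The $1/d$ effects at $d=25$ are of order one, not perturbative. The fourth-moment target drops from $15$ to about $13.9$, and $g_{25}(s_5)\approx1.5$ while $s_5\approx1.9$. These effects push genuine solutions into a different shape: three small positive latitudes, one moderately large positive latitude, and the largest one negative. The same estimate still rules out your family at $d=50$.

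\textbf{What the paper does instead.} Its ansatz encodes exactly that shape. With $v_i=\sqrt{d-1}\,u_i$, it sets $v_1,v_2,v_3=a-\epsilon,a,a+\epsilon$ with $\epsilon=1/100$ and $a\in[1/50,7/20]$. It then determines the two remaining values $v_4<v_5$ (called $b<c$ there, not your step $b$) exactly from the two moment equations via Vieta. So both moment equations hold identically in $a$, and no limit model or implicit-function perturbation is needed. The intermediate value theorem is applied only to the cubic condition $F_d(a)=g(a-\epsilon)+g(a)+g(a+\epsilon)+g(v_4)-g(v_5)$, where $g(t)=t(1-t^2/(d-1))^{3/2}$. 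The endpoint signs come from bounds uniform in $d\ge25$:
\begin{itemize}
\item $v_5-v_4>1/2$ and $g'>7/48$ on $[v_4,v_5]$ give $F_d(1/50)<0$;
\item $v_5-v_4<51/50$ and $g'<1$ give $F_d(7/20)>0$.
\end{itemize}
Positivity and ordering ($v_4>1/2>a+\epsilon$) are checked directly rather than inherited from a limiting configuration.
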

\begin{proof}
For convenience, we set $v_i=\sqrt{d-1}\,u_i$. Then the equations become
\[
\sum_{i=1}^5v_i^2=A_d\coloneq\frac{5(d-1)}{d},\qquad
\sum_{i=1}^5v_i^4=B_d\coloneq\frac{15(d-1)^2}{d(d+2)},\qquad
\sum_{i=1}^5
\delta_i v_i
\left(1-\frac{v_i^2}{d-1}\right)^{3/2}=0.
\]
We show that a solution exists for odd $d\geq 25$. Let $\epsilon=1/100$, then look for a solution of the form
\[
v_1=a-\epsilon, \qquad
v_2=a,\qquad
v_3=a+\epsilon,\qquad
v_4=b,\qquad
v_5=c,
\]
where $a\in [1/50,7/20]$ and $a+\epsilon<b<c$. We set
\[
s(a):=b^2+c^2=A_d-3a^2-2\epsilon^2, \qquad
p(a):=b^2c^2=\frac{s(a)^2-\left(B_d-3a^4-12a^2\epsilon^2-2\epsilon^4\right)}{2}.
\]
Now by Vieta's formulas, $b^2$ and $c^2$ are the roots of a quadratic equation, denoted with $f$, which means that if 
\[
D(a)=s(a)^2-4p(a),
\]
then
\[
b^2=\frac{s(a)-\sqrt{D(a)}}{2},
\qquad
c^2=\frac{s(a)+\sqrt{D(a)}}{2}.
\]
The following bounds prove to be enough for further operations:
\[
5>s(a)>4, \qquad D(a)>\frac{64}{25}, \qquad p(a)>\frac{13}{4}.
\]
The first comes immediately from $d\geq25$, $7/20\geq a$, and
$\epsilon=1/100$:
\[
s(a)\geq \frac{24}{5}-\frac{147}{400}-\frac{1}{5000}>4, \qquad 5>A_d>s(a).
\]
$D(a)$ is strictly increasing on the interval of $a$ because 
\[
\frac{dD}{da}= 12a\left(A_d-5a^2-6\epsilon^2\right)>0.
\]
Hence its minimum occurs at $a=2\epsilon=1/50$, where
\[
D(2\epsilon)> 2B_d-A_d^2 = \frac{5(d-10)(d-1)^2}{d^2(d+2)} \geq \frac{64}{25}.
\]
This proves the second bound on the discriminant. Similarly,
\[
0>\frac{dp}{da} = 6a\left(4a^2+4\epsilon^2-A_d\right),
\]
so we need to estimate $p(7/20)$. In order to obtain a lower bound for $p(a)$ we take it as a limit for a function of $d$ 
\[
\lim _{d \to \infty} p_a(d)=5-15a^2+6a^4 -10\epsilon^2+12a^2\epsilon^2+3\epsilon^4.
\]
It follows that $p(a)$ is greater than the limit, so by substituting $a=7/20$, we obtain the third inequality:
\[
p\left(\frac{7}{20}\right)> 5-\frac{147}{80}  +\frac{7203}{80000} - \frac{1}{1000} > \frac{13}{4}.
\]

Since
\[
f\left(\frac14\right) > \frac{1}{16}-\frac54+\frac{13}{4} = \frac{33}{16}>0
\]
and the minimum of the function lies at $s(a)/2>2$, it follows that $b>1/2$. Now from 
\[
\frac{1}{2} >\frac{7}{20}+\frac{1}{100} = \frac{9}{25}\geq a+\epsilon ,
\]
we conclude that the following inequalities are satisfied
\[
c>b>a+\epsilon>a>a-\epsilon>0.
\]

Finally, 
\[
d-1>5>s(a)>c^2 \implies \sqrt{d-1}>c, 
\]
which leaves the third equation to be satisfied. Let
\[
g(t) = t\left(1-\frac{t^2}{d-1}\right)^{3/2}, \qquad
F_d(a) = g(a-\epsilon)+g(a)+g(a+\epsilon)+g(b)-g(c).
\]
We note that $F_d$ is continuous on the interval. At its lower endpoint, $a=1/50=2\epsilon$, we have
\[
\frac{3}{50}=6\epsilon>g(\epsilon)+g(2\epsilon)+g(3\epsilon).
\]
On the other hand,
\[
c-b = \frac{\sqrt{D}}{b+c} > \frac{4\sqrt{10}}{25} > \frac{1}{2}.
\]

For $c\geq t\geq b$ we have $t^2<5$, and hence
\[
\frac{dg}{dt}
=
\sqrt{1-\frac{t^2}{d-1}}
\left(1-\frac{4t^2}{d-1}\right)
>
\frac{7}{48}.
\]
From these two inequalities and the mean value theorem we get
\[
g(c)-g(b)>\frac{7}{48}\cdot\frac{1}{2}>\frac{3}{50},
\]
hence
\[
0>F_d\left(\frac{1}{50}\right).
\]
At the upper endpoint $a=7/20$, the first three arguments of $g$ are
\[
\frac{17}{50}, \qquad \frac{7}{20}, \qquad \frac{9}{25},
\]
whose sum is $21/20$. We apply Bernoulli's inequality:
\[
\left(1-\frac{t^2}{d-1}\right)^{3/2} \geq 1-\frac{3t^2}{2(d-1)} > \frac{99}{100}
\]
for $9/25\geq t>0$. Consequently, 
\[
g\left(\frac{17}{50}\right) + g\left(\frac{7}{20}\right) + g\left(\frac{9}{25}\right) > \frac{99}{100}\cdot\frac{21}{20} = \frac{2079}{2000}.
\]

Using $5>A_d$ and $p>13/4$, we obtain
\[
s\left(\frac{7}{20}\right) < 5-3\left(\frac{7}{20}\right)^2 = \frac{1853}{400},
\qquad
2bc=2\sqrt{p}>\sqrt{13}>\frac{18}{5}.
\]
Therefore
\[
\left(\frac{51}{50}\right)^2>\frac{1853}{400}-\frac{18}{5}
=
\frac{413}{400}>(c-b)^2
=
s-2bc \implies \frac{51}{50}>c-b.
\]

Now again from the mean value theorem and  $1>\dfrac{dg}{dt}>0$ on $[b,c]$,
\[
\frac{2079}{2000}>\frac{51}{50}>g(c)-g(b).
\]
Thus
\[
F_d\left(\frac{7}{20}\right)>0.
\]
By the intermediate value theorem, there exists
$a_0\in \left(1/50,7/20\right)$ such that $F_d(a_0)=0.$ Now we can take $a_0-\epsilon, a_0, a_0+\epsilon, b_0, c_0$ which satisfy all three required equations and normalize them with $\sqrt{d-1}$, which gives the required five distinct latitude parameters.
\end{proof}

\begin{lemma}\label{lem:seven-design-parameters}
For every odd integer $d\geq9$, there exist latitude parameters
$0<u_1<u_2<u_3<1$ and $t\in(0,2\pi/9)$ such that
\begin{align*}
&9u_1^2+u_2^2+u_3^2=\frac{11}{d},\qquad
9u_1^4+u_2^4+u_3^4=\frac{33}{d(d+2)},\qquad
9u_1^6+u_2^6+u_3^6=\frac{165}{d(d+2)(d+4)},\\
&u_1(1-u_1^2)^{5/2}\sum_{k=-4}^{4}\cos(\pi+kt)
+u_2(1-u_2^2)^{5/2}+u_3(1-u_3^2)^{5/2}=0.
\end{align*}
\end{lemma}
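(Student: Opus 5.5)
The plan is to decouple the rotation equation from the three moment equations. Writing $g(u)=u(1-u^2)^{5/2}$ and using $\sum_{k=-4}^{4}\cos(\pi+kt)=-\sum_{k=-4}^4\cos(kt)=-\dfrac{\sin(9t/2)}{\sin(t/2)}=:-D_9(t)$, the last equation becomes
\[
g(u_1)\,D_9(t)=g(u_2)+g(u_3).
\]
On $(0,2\pi/9)$ the Dirichlet kernel $D_9$ is continuous and decreases from $9$ (as $t\to0^+$) to $0$ (as $t\to 2\pi/9$). Hence, once $u_1,u_2,u_3\in(0,1)$ satisfy the three moment equations, the intermediate value theorem gives a suitable $t$ as soon as
\[
0<g(u_2)+g(u_3)<9\,g(u_1). \tag{$\ast$}
\]
So the proof reduces to two tasks: solving the moment system with $0<u_1<u_2<u_3<1$, and then verifying $(\ast)$.

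For the moment system I would pass to the normalized squares $w_i=d\,u_i^2$, as in the proof of Lemma~\ref{lem:five-design-parameters}. I would put $w_1=x$ and let $p=w_2+w_3$ and $q=w_2w_3$. The first equation gives $p=11-9x$. The second gives $q=\tfrac12\bigl(p^2-(\tfrac{33d}{d+2}-9x^2)\bigr)$. The identity $w_2^3+w_3^3=p^3-3pq$ then turns the sixth-moment equation into a single cubic $\Phi_d(x)=0$. Its coefficients are rational in $d$ and converge, as $d\to\infty$, to those of the Gaussian-moment system
\[
9w_1+w_2+w_3=11,\qquad 9w_1^2+w_2^2+w_3^2=33,\qquad 9w_1^3+w_2^3+w_3^3=165.
\]
I would locate a root of this limiting system explicitly. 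Then I would choose an interval $[x_-,x_+]$ with rational endpoints on which $\Phi_d$ changes sign for every odd $d\ge 9$, checking the sign at each endpoint via monotone bounds in $1/d$, the same way $p(a)$ was bounded by its limit earlier.

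On that interval I would also show, with elementary bounds valid for all $d\ge9$, the following:
\begin{itemize}
\item the discriminant $p^2-4q$ is positive, so $w_2\ne w_3$ are real;
\item $q>0$ and $w_2>x$, which gives $u_1<u_2<u_3$;
\item $w_3<d$, which gives $u_3<1$.
\end{itemize}

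Finally I would check $(\ast)$. For $u\le u_3$ we have $u^2=O(1/d)$, so $g(u)=u\,(1+O(1/d))$, and $(\ast)$ becomes essentially $\sqrt{w_2}+\sqrt{w_3}<9\sqrt{w_1}$ with a margin. I would make this effective using Bernoulli-type bounds such as $(1-u^2)^{5/2}\ge 1-\tfrac52u^2$, and the fact that $g$ is increasing on $[0,1/\sqrt6]$, to compare $g(u_2)+g(u_3)$ with $9g(u_1)$ uniformly in $d\ge 9$.

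The main obstacle is precisely this step: $(\ast)$ needs $w_1$ not too small relative to $w_2,w_3$. The weight $9$ on $u_1$ means the moment system pushes $w_1$ toward the bulk of the Gaussian distribution and $w_2,w_3$ toward its tail, so $\sqrt{w_2}+\sqrt{w_3}<9\sqrt{w_1}$ is plausible but not automatic. I would first compute the limiting solution numerically to confirm the margin. If the margin is comfortable, uniform bounds for $d\ge 9$ will follow as above. If it is thin near $d=9$, I would treat the finitely many small odd $d$ by direct interval evaluation and use the limiting estimates for all larger $d$.
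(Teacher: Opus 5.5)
Your plan is the paper's proof. Eliminate $u_2^2,u_3^2$ by Vieta to get one cubic in the weight-$9$ latitude. Bracket a root uniformly in odd $d\ge 9$; the paper uses $a=\sqrt d\,u_1\in[0.66,0.74]$, with $b=\sqrt d\,u_2$, $c=\sqrt d\,u_3$. Then obtain $t$ from the intermediate value theorem via $0<g(u_2)+g(u_3)<9g(u_1)$, which is the paper's $Q(2\pi/9)>0>Q(0)$.

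The step you call the main obstacle is in fact immediate. By Cauchy--Schwarz, $(b+c)^2\le 2(b^2+c^2)=2(11-9a^2)<81a^2$ once $a^2>2/9$, and $u_1<u_2<u_3$ gives $(1-u_1^2)^{5/2}>(1-u_j^2)^{5/2}$. So no monotonicity of $g$ is needed, which is just as well, since $u_3\approx0.67>1/\sqrt6$ at $d=9$. The genuinely delicate point is the lower endpoint of the bracket at $d=9$. There, positivity of the discriminant and the required negative sign of the cubic confine it to roughly $a\in(0.657,0.665)$. Moreover, the cubic's value at a fixed lower endpoint is not monotone in $d$, so monotone bounds in $1/d$ do not suffice there. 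The paper handles this by comparing consecutive odd $d$, which shows the maximum is at $d=13$.
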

\begin{proof}
In order to establish the existence we rescale the latitudes
\[
a=\sqrt{d}u_1,\qquad b=\sqrt{d}u_2, \qquad c=\sqrt{d}u_3,
\]
and the system becomes
\[
9a^2+b^2+c^2=11,\qquad9a^4+b^4+c^4=B_d\coloneq\frac{33d^2}{d(d+2)},\qquad9a^6+b^6+c^6=C_d\coloneq\frac{165d^3}{d(d+2)(d+4)}.
\]
From here on we treat $b^2$ and $c^2$ as roots of some quadratic equation with a discriminant
\[
D=2(B_d-9a^4)-(11-9a^2)^2.
\]
When expressed through $a$ and $d$, the two latitudes become
\[
b=\sqrt{\frac{11-9a^2-\sqrt{D}}{2}}, \qquad c=\sqrt{\frac{11-9a^2+\sqrt{D}}{2}}.
\]
Therefore, the final equation can be expressed as follows
\[
F_d(a)=9a^6+\left(\frac{11-9a^2-\sqrt{D}}{2}\right)^3+ \left(\frac{11-9a^2+\sqrt{D}}{2}\right)^3-C_d=0
\]
and it is enough to prove that there is an $a$ that satisfies it, in order to obtain the necessary result. Choose $a\in [0.66,0.74]$ and $d\geq9$, on this range the discriminant is positive, and we are able to verify the existence of a root. For this, it is sufficient to prove that 
\[
F_d(0.74)>0>F_d(0.66). 
\]
First, we prove that the left-hand side holds. It is equal to
\[
F_d(0.74)=11\left(-45\left(\frac{1131}{2500} \right)^3+27\left(\frac{1131}{2500} \right)\frac{d-1}{d+2}-8\frac{(d-1)(d-2)}{(d+2)(d+4)}\right),
\]
and to determine its growth for $d$ we denote it as a function of $d$ and its derivative
\[
F_d(0.74)=F_{0.74}(d), \qquad 0>\frac{dF_{0.74}}{dd}=\frac{11(-88389d^2+492888d+2185776)}{2500(d+2)^2(d+4)^2}
\] 
for $d\geq 9$. As the function decreases we can set a lower bound through its value for $d$ to infinity
\[
F_{0.74}(d)>\lim_{d\to \infty}F_{0.74}(d)>0.53.
\]
Let us check now the right hand side. The function evaluates as follows   
\[
F_{0.66}(d)=11\left(-45\left(\frac{1411}{2500} \right)^3+27\left( \frac{1411}{2500}\right)\frac{d-1}{d+2}-8\frac{(d-1)(d-2)}{(d+2)(d+4)} \right).
\]
Since we only consider odd dimensions, we can compare consecutive values directly:
\[
F_{0.66}(d+2)-F_{0.66}(d) = -\frac{33(21903d-268582)}{1250(d+2)(d+4)(d+6)}.
\]
This is positive for $d=9,11$ and negative for every odd $d \geq 13$, so
$F_{0.66}$ attains its maximum at $d=13$. Furthermore, $0>-0.44>F_{0.66}(13)>-0.55,$
and thus $0>F_{0.66}(13)\geq F_{0.66}(d).$

This concludes the required result, and proves the existence of latitudes that satisfy the moment equations. The next result that has to be established is the existence of the nine rotation angles, which we establish through $t$, that satisfies
\[
Q(t)=a\left(1-\frac{a^2}{d}\right)^{5/2}\sum_{k=-4} ^4 \cos(\pi+kt)+b\left(1-\frac{b^2}{d}\right)^{5/2}+c\left(1-\frac{c^2}{d}\right)^{5/2}=0.
\]
We choose $t\in\left[0,\frac{2\pi}{9}\right]$ and following an analogous method prove that at the endpoints of the interval the function changes signs and consequently there is a value of $t$ for which the equation holds. It is trivial at the upper endpoint as 
\[
\sum_{k=-4} ^4 \cos\left(\pi+\frac{2\pi k}{9}\right)=0,\qquad
Q\left(\frac{2\pi}{9} \right)=b\left(1-\frac{b^2}{d}\right)^{5/2}+c\left(1-\frac{c^2}{d}\right)^{5/2}>0.
\]
This establishes the left hand side of 
\[
Q\left(\frac{2\pi}{9}\right)>0>Q\left(0\right).
\]
The right hand side is equivalent to proving
\[
0>Q\left(0\right)=-9a\left(1-\frac{a^2}{d} \right)^{5/2} + b \left( 1-\frac{b^2}{d}\right)^{5/2} + c\left(1-\frac{c^2}{d} \right)^{5/2}.
\]
or to be more straightforward
\[
9a\left(1-\frac{a^2}{d} \right)^{5/2}>b \left( 1-\frac{b^2}{d}\right)^{5/2} + c\left(1-\frac{c^2}{d} \right)^{5/2}.
\]
For $d \geq 9$ and in the chosen interval, we can check that $\sqrt{d}>c>b>a>0$. Now in order to establish this inequality we also prove that 
\[
9a>b+c.
\]
By applying the Cauchy-Schwarz inequality and $a\geq0.66$, we obtain
\[
81a^2>2(11-9a^2)=2(b^2+c^2), \quad2(b^2+c^2)>(b+c)^2\implies 81a^2>(b+c)^2
\] 
and due to the latitudes being positive it is exactly what was required. Now we perform the following multiplication, and the result for the function $Q\left(0\right)$ follows fairly simply:
\begin{align*}
&9a\left(1-\frac{a^2}{d} \right)^{5/2} >b\left(1-\frac{a^2}{d} \right)^{5/2} +c\left(1-\frac{a^2}{d} \right)^{5/2}\\
&\implies9a\left(1-\frac{a^2}{d} \right)^{5/2}>b \left( 1-\frac{b^2}{d}\right)^{5/2} + c\left(1-\frac{c^2}{d} \right)^{5/2}.
\end{align*}
This concludes the proof for existence of such $t$ as
\[
Q\left(\frac{2\pi}{9}\right)>0>Q\left(0\right)
\]
and in combination with the existence of latitudes for which the moment equations are satisfied, this proves the lemma.
\end{proof}

\section{Specific constructions}

The technical lemmas prove results that hold asymptotically, however, analogical results can be proven for a smaller number of layers (and in exactly the same way). For four layers, the system has a valid solution only up to and including dimension $23$. With three layers, the method is available up to dimension $7$. One can also ask if we can allow for arbitrary rotations and multiple copies on the same latitude, as in the construction for the $7$-designs. As in the $7$-design construction, it provides a valid construction up to dimension $41$ with nine copies of the half-design. Thus, in the following table we give the best results known to us up to dimension $29$.

\begin{table}[h!]
\label{table:5-design}
\centering
\begin{tabular}{r r r l}
\toprule
$d$ & DGS & $|C_d|$ & Source \\
\midrule
 7 &   56 &    56 &      \cite{CohnKumar2007} \\
 9 &   90 &    384 &     \cite{ScottGrassl2010,Lindblad2026} \\
11 &  132 &    600 &    \cite{ScottGrassl2010,Lindblad2026} \\
13 &  182 &    864 &    \cite{ScottGrassl2010,Lindblad2026} \\
15 &  240 &  1\,176 &   \cite{ScottGrassl2010,Lindblad2026} \\
17 &  306 &  1\,152 &   \cite{SudaXuYu2025} \\
19 &  380 &  1\,944 &   \cite{ScottGrassl2010,Lindblad2026} \\
21 &  462 &  2\,400 &   \cite{ScottGrassl2010,Lindblad2026} \\
23 &  552 &   552 &   \cite{CohnKumar2007} \\
25 &  650 &  4\,320 &   \cite{ScottGrassl2010,Lindblad2026} \\
27 &  756 &  5\,070 &   \cite{ScottGrassl2010,Lindblad2026} \\
29 &  870 &  5\,880 &   \cite{ScottGrassl2010,Lindblad2026} \\
\bottomrule
\end{tabular}
\caption{Cardinalities of $5$-design constructions in dimension $d$ and their corresponding spherical
$5$-designs in $\bb{R}^{d-1}$.}
\label{tab:odd-five-designs}
\end{table}
\vspace{6em}
If Zauner's hypothesis is true, we do not need to rely on the numerical results of Scott and Grassl. It is also noteworthy that $E_8$ and the Leech lattice provide derived codes for $d=7$ and $d=23$ with cardinalities $56$ and $552$, respectively, which are tight \cite{CohnKumar2007}. For $d=17$ the construction comes from a maximal set of mutually unbiased bases.

For $7$-designs, we can also use fewer than $10$ layers for lower dimensions. We find that the following numbers of layers provide a design up to the indicated dimension.
\begin{table}[b]
\centering
\begin{tabular}{c|ccccc}
\hline
\text{layers} & 10 & 9 & 8 & 7 & 6 \\
\hline
$\max d$ & 75 & 31 & 19 & 11 & 9 \\
\hline
\end{tabular}
\caption{Maximum odd dimension for each number of layers for $7$-designs.}
\label{tab:layer-dimension}
\end{table}

Except in $d=3$ and $d=23$, the best cardinalities to our knowledge come from the lifts we describe here. For $d=23$ the DGS bound is reached by a derived code of the Leech lattice.

\begin{table}[b]
\centering
\begin{tabular}{r r r l}
\toprule
$d$ & DGS & $|C_d|$ & Source \\
\midrule
 3 &      20 &              24 & \cite{HardinSloane1996}\\
 5 &      70 &             480 & \cite{HarpePacheVenkov2007}\\
 7 &     168 &           5\,400 & \cite{Misawa2026,IosueEtAl2024}\\
 9 &     330 &           2\,400 & \cite{Hoggar1982}\\
11 &     572 &          70\,400 &
    \cite{Hoggar1982,LyubichShatalova2013,RabauBajnok1991}\\
13 &     910 &          10\,080 & \cite{Hoggar1982}\\
15 &  1\,360 &          24\,192 & \cite{HarpePacheVenkov2007}\\
17 &  1\,938 &          69\,120 & \cite{HarpePacheVenkov2007}\\
19 &  2\,660 &       1\,728\,000 &
    \cite{KuengGross2015,LyubichShatalova2013,RabauBajnok1991}\\
21 &  3\,542 &         142\,560 & \cite{HarpePacheVenkov2007}\\
23 &  4\,600 &           4\,600 & \cite{CohnKumar2007}\\
25 &  5\,850 &         142\,560 & \cite{Lindblad2026}\\
27 &  7\,308 &      786\, 240 &
    \cite{BachocVenkov2001}\\
29 &  8\,990 &      77\,968\,800 &
    \cite{Misawa2026,IosueEtAl2024}\\
\bottomrule
\end{tabular}
\caption{Cardinalities of the $7$-design construction and their respective complex projective $3$-designs
$\mathbb{CP}^{(d-3)/2}$.}
\label{tab:odd-seven-designs}
\end{table}
\vspace{3em}
The sources of the lifted constructions, however, are noticeably diverse. Small dimensions are mostly covered by exceptional configurations. Another exceptional configuration exists in dimension $12$ due to Nasmith \cite{Nasmith2022}. Several highly symmetric lattice shells and their unions \cite{HarpePacheVenkov2007} also provide strong constructions in several dimensions. In the remaining dimensions under consideration, we either have no relatively satisfactory construction or our constructions start from lifting complex projective $3$-designs. These projective complex designs arise either sporadically or from recursive constructions.

\section{Acknowledgements}

The research of the first author is supported by Bulgarian NSF grant KP-06-
N72/6-2023. The research of the second author is supported by Bulgarian National Program "Education with Science". We sincerely thank to Peter Boyvalenkov for the valuable advice and discussions.

\section{AI Use Statement}

During the work on the article, Artificial Intelligence was used for assistance in the numerical exploration of the feasibility of different lifting structures (number of layers, orientations, etc.), as well as the bibliographic search. AI has had no input on research hypotheses and core methods. AI input in proofs is confined to numeric simplifications. All AI output was verified by the authors.

Artificial intelligence was also used for proofreading, language corrections and formatting. 

The authors assume full responsibility for the content of the article.

\nocite{*}
\bibliographystyle{plain}
\bibliography{refs}

@misc{ArmanEtAl2026,
  author        = {Arman, Andrii and Bondarenko, Andriy and
                   Prymak, Andriy and Radchenko, Danylo},
  title         = {A Construction of Spherical {5}-Designs with
                   {$O(d^2)$} Points},
  year          = {2026},
  eprint        = {2606.01376},
  archivePrefix = {arXiv},
  primaryClass  = {math.CO},
  doi           = {10.48550/arXiv.2606.01376},
  url           = {https://arxiv.org/abs/2606.01376}
}

@phdthesis{Bajnok1989,
  author  = {Bajnok, B{\'e}la},
  title   = {Construction of Spherical {$t$}-Designs},
  school  = {The Ohio State University},
  address = {Columbus, Ohio},
  year    = {1989},
  type    = {Ph.D. dissertation},
  url     = {https://etd.ohiolink.edu/acprod/odb_etd/etd/r/1501/10?clear=10&p10_accession_num=osu1487671108305599}
}

@article{HunzikerSepanskiStanke2012,
  author  = {Hunziker, Markus and Sepanski, Mark R. and Stanke, Ronald J.},
  title   = {The Minimal Representation of the Conformal Group and
             Classical Solutions to the Wave Equation},
  journal = {Journal of Lie Theory},
  volume  = {22},
  number  = {2},
  pages   = {301--360},
  year    = {2012},
  doi     = {10.5802/jolt.671},
  url     = {https://jolt.centre-mersenne.org/item/10.5802/jolt.671/}
}

@article{DelsarteGoethalsSeidel1977,
  author  = {Delsarte, Philippe and Goethals, Jean-Marie and Seidel, Johan J.},
  title   = {Spherical Codes and Designs},
  journal = {Geometriae Dedicata},
  volume  = {6},
  number  = {3},
  pages   = {363--388},
  year    = {1977},
  doi     = {10.1007/BF03187604},
  url     = {https://doi.org/10.1007/BF03187604}
}

@article{BannaiDamerell1979,
  author  = {Bannai, Eiichi and Damerell, Robert M.},
  title   = {Tight Spherical Designs, {I}},
  journal = {Journal of the Mathematical Society of Japan},
  volume  = {31},
  number  = {1},
  pages   = {199--207},
  year    = {1979},
  doi     = {10.2969/jmsj/03110199},
  url     = {https://doi.org/10.2969/jmsj/03110199}
}

@article{BannaiDamerell1980,
  author  = {Bannai, Eiichi and Damerell, Robert M.},
  title   = {Tight Spherical Designs, {II}},
  journal = {Journal of the London Mathematical Society},
  series  = {2},
  volume  = {21},
  number  = {1},
  pages   = {13--30},
  year    = {1980},
  doi     = {10.1112/jlms/s2-21.1.13},
  url     = {https://doi.org/10.1112/jlms/s2-21.1.13}
}

@article{BoyvalenkovNikova1998,
  author  = {Boyvalenkov, Peter G. and Nikova, Svetla},
  title   = {Improvements of the Lower Bounds on the Size of Some Spherical Designs},
  journal = {Mathematica Balkanica},
  series  = {New Series},
  volume  = {12},
  number  = {1--2},
  pages   = {151--160},
  year    = {1998},
  url     = {https://www.math.bas.bg/infres/MathBalk/MB-12/MB-12-151-160.pdf}
}

@article{BoyvalenkovDanevNikova1999,
  author  = {Boyvalenkov, Peter G. and Danev, Danyo and Nikova, Svetla},
  title   = {Nonexistence of Certain Spherical Designs of Odd Strengths and Cardinalities},
  journal = {Discrete \& Computational Geometry},
  volume  = {21},
  pages   = {143--156},
  year    = {1999},
  doi     = {10.1007/PL00009406},
  url     = {https://doi.org/10.1007/PL00009406}
}

@article{BoyvalenkovStoyanova2010,
  author  = {Boyvalenkov, Peter G. and Stoyanova, Maya},
  title   = {A New Asymptotic Bound of the Minimum Possible Odd Cardinality of Spherical {$(2k-1)$}-Designs},
  journal = {Discrete Mathematics},
  volume  = {310},
  number  = {15--16},
  pages   = {2170--2175},
  year    = {2010},
  doi     = {10.1016/j.disc.2010.04.007},
  url     = {https://doi.org/10.1016/j.disc.2010.04.007}
}

@article{Bajnok1991FourFive,
  author  = {Bajnok, B{\'e}la},
  title   = {Construction of Spherical {4}- and {5}-Designs},
  journal = {Graphs and Combinatorics},
  year    = {1992},
  doi     = {10.1007/BF01787629},
  url     = {https://doi.org/10.1007/BF01787629}
}

@article{Bajnok1992TDesigns,
  author  = {Bajnok, B{\'e}la},
  title   = {Construction of Spherical {$t$}-Designs},
  journal = {Geometriae Dedicata},
  volume  = {43},
  number  = {2},
  pages   = {167--179},
  year    = {1992},
  doi     = {10.1007/BF00147866},
  url     = {https://doi.org/10.1007/BF00147866}
}

@article{Reznick1995,
  author  = {Reznick, Bruce},
  title   = {Some Constructions of Spherical {5}-Designs},
  journal = {Linear Algebra and its Applications},
  volume  = {226--228},
  pages   = {163--196},
  year    = {1995},
  doi     = {10.1016/0024-3795(95)00101-V},
  url     = {https://doi.org/10.1016/0024-3795(95)00101-V}
}

@article{Kuperberg2006Codes,
  author  = {Kuperberg, Greg},
  title   = {Numerical Cubature Using Error-Correcting Codes},
  journal = {SIAM Journal on Numerical Analysis},
  volume  = {44},
  number  = {3},
  pages   = {897--907},
  year    = {2006},
  doi     = {10.1137/040615572},
  url     = {https://doi.org/10.1137/040615572}
}

@article{Kuperberg2006HatBox,
  author  = {Kuperberg, Greg},
  title   = {Numerical Cubature from Archimedes' Hat-Box Theorem},
  journal = {SIAM Journal on Numerical Analysis},
  volume  = {44},
  number  = {3},
  pages   = {908--935},
  year    = {2006},
  doi     = {10.1137/040615584},
  url     = {https://doi.org/10.1137/040615584}
}

@article{MohammadpourWaldron2024,
  author  = {Mohammadpour, Mozhgan and Waldron, Shayne},
  title   = {Complex Spherical Designs from Group Orbits},
  journal = {Journal of Approximation Theory},
  volume  = {301},
  pages   = {106047},
  year    = {2024},
  doi     = {10.1016/j.jat.2024.106047},
  url     = {https://doi.org/10.1016/j.jat.2024.106047}
}

@misc{Misawa2026,
  author        = {Misawa, Ryutaro},
  title         = {Explicit Construction of Spherical {5}- and {7}-Designs},
  year          = {2026},
  eprint        = {2602.19757},
  archivePrefix = {arXiv},
  primaryClass  = {math.CO},
  doi           = {10.48550/arXiv.2602.19757},
  url           = {https://arxiv.org/abs/2602.19757}
}

@article{BannaiZhaoEtAl2018,
  author  = {Bannai, Eiichi and Zhao, Da and Zhu, Lin and
             Zhu, Yan and Zhu, Yinfeng},
  title   = {Half of an Antipodal Spherical Design},
  journal = {Archiv der Mathematik},
  volume  = {110},
  number  = {5},
  pages   = {459--466},
  year    = {2018},
  doi     = {10.1007/s00013-017-1141-1},
  url     = {https://doi.org/10.1007/s00013-017-1141-1}
}

@article{ZhuBannaiEtAl2017,
  author  = {Zhu, Yan and Bannai, Eiichi and Bannai, Etsuko and
             Kim, Kyoung-Tark and Yu, Wei-Hsuan},
  title   = {On Spherical Designs of Some Harmonic Indices},
  journal = {The Electronic Journal of Combinatorics},
  volume  = {24},
  number  = {2},
  pages   = {P2.14},
  year    = {2017},
  doi     = {10.37236/6437},
  url     = {https://doi.org/10.37236/6437}
}

@incollection{Konig1999,
  author    = {K{\"o}nig, Hermann},
  title     = {Cubature Formulas on Spheres},
  booktitle = {Advances in Multivariate Approximation},
  editor    = {Hau{\ss}mann, Wolfgang and Jetter, Kurt and Reimer, Manfred},
  series    = {Mathematical Research},
  volume    = {107},
  pages     = {201--211},
  publisher = {Wiley-VCH},
  address   = {Berlin},
  year      = {1999},
  note      = {Proceedings of the conference held in Witten-Bommerholz, 1998}
}

@article{Hoggar1982,
  author  = {Hoggar, Stuart G.},
  title   = {{$t$}-Designs in Projective Spaces},
  journal = {European Journal of Combinatorics},
  volume  = {3},
  number  = {3},
  pages   = {233--254},
  year    = {1982},
  doi     = {10.1016/S0195-6698(82)80035-8},
  url     = {https://doi.org/10.1016/S0195-6698(82)80035-8}
}

@misc{KuengGross2015,
  author        = {Kueng, Richard and Gross, David},
  title         = {Qubit Stabilizer States Are Complex Projective {$3$}-Designs},
  year          = {2015},
  eprint        = {1510.02767},
  archivePrefix = {arXiv},
  primaryClass  = {quant-ph},
  doi           = {10.48550/arXiv.1510.02767},
  url           = {https://arxiv.org/abs/1510.02767}
}

@article{IosueEtAl2024,
  author  = {Iosue, Joseph T. and Mooney, T. C. and
             Ehrenberg, Adam and Gorshkov, Alexey V.},
  title   = {Projective Toric Designs, Quantum State Designs,
             and Mutually Unbiased Bases},
  journal = {Quantum},
  volume  = {8},
  pages   = {1546},
  year    = {2024},
  doi     = {10.22331/q-2024-12-03-1546},
  url     = {https://doi.org/10.22331/q-2024-12-03-1546}
}

@misc{LyubichShatalova2013,
  author        = {Lyubich, Yuri I. and Shatalova, Oksana A.},
  title         = {A Recursive Construction of Projective Cubature Formulas
                   and Related Isometric Embeddings},
  year          = {2013},
  eprint        = {1310.4562},
  archivePrefix = {arXiv},
  primaryClass  = {math.FA},
  url           = {https://arxiv.org/abs/1310.4562}
}

@article{Kuperberg2006,
  author  = {Kuperberg, Greg},
  title   = {Numerical Cubature from {Archimedes'} Hat-Box Theorem},
  journal = {SIAM Journal on Numerical Analysis},
  volume  = {44},
  number  = {3},
  pages   = {908--935},
  year    = {2006},
  doi     = {10.1137/040615584},
  url     = {https://doi.org/10.1137/040615584}
}

@article{Lindblad2026,
  author  = {Ayodeji Lindblad},
  title   = {Designs Related Through Projective and Hopf Maps},
  journal = {Discrete \& Computational Geometry},
  volume  = {75},
  pages   = {950--968},
  year    = {2026},
  doi     = {10.1007/s00454-025-00805-7},
  url     = {https://link.springer.com/article/10.1007/s00454-025-00805-7}
}

@article{ScottGrassl2010,
  author  = {A. J. Scott and M. Grassl},
  title   = {{SIC-POVMs}: A New Computer Study},
  journal = {Journal of Mathematical Physics},
  volume  = {51},
  pages   = {042203},
  year    = {2010},
  doi     = {10.1063/1.3374022},
  url     = {https://arxiv.org/abs/0910.5784}
}

@inproceedings{KlappeneckerRoetteler2005,
  author    = {Andreas Klappenecker and Martin R{\"o}tteler},
  title     = {Mutually Unbiased Bases are Complex Projective 2-Designs},
  booktitle = {Proceedings of the 2005 IEEE International Symposium on Information Theory},
  pages     = {1740--1744},
  year      = {2005},
  doi       = {10.1109/ISIT.2005.1523643},
  url       = {https://arxiv.org/abs/quant-ph/0502031}
}

@misc{SudaXuYu2025,
  author        = {Sho Suda and Zili Xu and Wei-Hsuan Yu},
  title         = {Existence and Nonexistence of Spherical $5$-Designs of Minimal Type},
  year          = {2025},
  eprint        = {2508.18685},
  archivePrefix = {arXiv},
  primaryClass  = {math.CO},
  url           = {https://arxiv.org/abs/2508.18685}
}

@article{MartinMuzychukWilliford2007,
  author  = {William J. Martin and Mikhail Muzychuk and Jason Williford},
  title   = {Imprimitive Cometric Association Schemes: Constructions and Analysis},
  journal = {Journal of Algebraic Combinatorics},
  volume  = {25},
  pages   = {399--415},
  year    = {2007},
  doi     = {10.1007/s10801-006-0043-2},
  url     = {https://emis.de/ft/43670}
}

@article{HughesWaldron2021,
  author  = {Daniel Hughes and Shayne Waldron},
  title   = {Spherical $(t,t)$-Designs with a Small Number of Vectors},
  journal = {Linear Algebra and its Applications},
  volume  = {608},
  pages   = {84--106},
  year    = {2021},
  doi     = {10.1016/j.laa.2020.08.010},
  url     = {https://www.math.auckland.ac.nz/~waldron/Preprints/Numerical-t-designs/1-s2.0-S0024379520303839-main.pdf}
}

@article{HardinSloane1996,
  author  = {Hardin, Ronald H. and Sloane, Neil J. A.},
  title   = {McLaren's Improved Snub Cube and Other New Spherical Designs in Three Dimensions},
  journal = {Discrete \& Computational Geometry},
  volume  = {15},
  number  = {4},
  pages   = {429--441},
  year    = {1996},
  doi     = {10.1007/BF02711518},
  url     = {https://arxiv.org/abs/math/0207211}
}

@article{CohnConwayElkiesKumar2007,
  author  = {Cohn, Henry and Conway, John H. and Elkies, Noam D. and Kumar, Abhinav},
  title   = {The {$D_4$} Root System Is Not Universally Optimal},
  journal = {Experimental Mathematics},
  volume  = {16},
  number  = {3},
  pages   = {313--320},
  year    = {2007},
  doi     = {10.1080/10586458.2007.10129008},
  url     = {https://arxiv.org/abs/math/0607447}
}

@article{BoyvalenkovEtAl2025,
  author  = {Boyvalenkov, Peter G. and Dragnev, Peter D. and Hardin, Douglas P. and Saff, Edward B. and Stoyanova, Maya M.},
  title   = {Universal Minima of Discrete Potentials for Sharp Spherical Codes},
  journal = {Revista Matem{\'a}tica Iberoamericana},
  volume  = {41},
  number  = {2},
  pages   = {603--650},
  year    = {2025},
  doi     = {10.4171/RMI/1509},
  url     = {https://arxiv.org/abs/2211.00092},
}

@article{BoseChowla1962,
  author  = {Bose, R. C. and Chowla, S.},
  title   = {Theorems in the additive theory of numbers},
  journal = {Commentarii Mathematici Helvetici},
  volume  = {37},
  pages   = {141--147},
  year    = {1962},
  doi     = {10.1007/BF02566968},
  url     = {https://doi.org/10.1007/BF02566968}
}

@misc{ZhuMaoYi2024,
  author        = {Zhu, Huangjun and Mao, Chengsi and Yi, Changhao},
  title         = {Third moments of qudit {Clifford} orbits and 3-designs based on magic orbits},
  year          = {2024},
  eprint        = {2410.13575},
  archivePrefix = {arXiv},
  primaryClass  = {quant-ph},
  doi           = {10.48550/arXiv.2410.13575},
  url           = {https://arxiv.org/abs/2410.13575}
}

@article{Xiang2022,
  author  = {Xiang, Ziqing},
  title   = {Explicit spherical designs},
  journal = {Algebraic Combinatorics},
  volume  = {5},
  number  = {2},
  pages   = {347--369},
  year    = {2022},
  doi     = {10.5802/alco.213},
  url     = {https://alco.centre-mersenne.org/articles/10.5802/alco.213/}
}

@incollection{GoethalsSeidel1981,
  author    = {Goethals, J. M. and Seidel, J. J.},
  title     = {Cubature Formulae, Polytopes, and Spherical Designs},
  booktitle = {The Geometric Vein: The Coxeter Festschrift},
  editor    = {Davis, Chandler and Gr{\"u}nbaum, Branko and Sherk, F. A.},
  publisher = {Springer},
  address   = {New York},
  pages     = {203--218},
  year      = {1981},
  doi       = {10.1007/978-1-4612-5648-9_13},
  url       = {https://doi.org/10.1007/978-1-4612-5648-9_13}
}

@article{BaladramSuprijanto2014,
  author  = {Baladram, Mohammad Samy and Suprijanto, Djoko},
  title   = {New spherical {$(2s+1)$}-designs from {Kuperberg}'s set:
             An experimental result},
  journal = {Applied Mathematics and Computation},
  volume  = {249},
  pages   = {45--52},
  year    = {2014},
  doi     = {10.1016/j.amc.2014.10.045},
  url     = {https://doi.org/10.1016/j.amc.2014.10.045}
}

@article{SamuelGedik2024,
  author  = {Solomon B. Samuel and Zafer Gedik},
  title   = {Group theoretical classification of {SIC-POVMs}},
  journal = {Journal of Physics A: Mathematical and Theoretical},
  volume  = {57},
  number  = {29},
  pages   = {295304},
  year    = {2024},
  doi     = {10.1088/1751-8121/ad5ca9},
  url     = {https://doi.org/10.1088/1751-8121/ad5ca9}
}

@article{RabauBajnok1991,
  author  = {Patrick Rabau and B{\'e}la Bajnok},
  title   = {Bounds for the number of nodes in {Chebyshev} type quadrature formulas},
  journal = {Journal of Approximation Theory},
  volume  = {67},
  number  = {2},
  pages   = {199--214},
  year    = {1991},
  doi     = {10.1016/0021-9045(91)90018-6},
  url     = {https://doi.org/10.1016/0021-9045(91)90018-6}
}

@misc{Radchenko2026Stark,
  author  = {},
  title   = {Stark Units for Real Quadratic Fields and Reciprocity Laws},
  year    = {2026},
  month   = oct,
  note    = {Unpublished manuscript, 4 October 2026},
  url     = {https://www.danrad.net/papers/stark-reciprocity.pdf},
  urldate = {2026-10-07}
}

@article{Nasmith2022,
  author  = {Benjamin Nasmith},
  title   = {Octonions and the two strictly projective tight $5$-designs},
  journal = {Algebraic Combinatorics},
  volume  = {5},
  number  = {3},
  pages   = {401--411},
  year    = {2022},
  doi     = {10.5802/alco.215},
  url     = {https://doi.org/10.5802/alco.215}
}

@article{HarpePacheVenkov2007,
  author  = {Pierre de la Harpe and Claude Pache and Boris B. Venkov},
  title   = {Construction of spherical cubature formulas using lattices},
  journal = {St. Petersburg Mathematical Journal},
  volume  = {18},
  number  = {1},
  pages   = {119--139},
  year    = {2007},
  doi     = {10.1090/S1061-0022-07-00946-6},
  url     = {https://doi.org/10.1090/S1061-0022-07-00946-6}
}

@article{BannaiSloane1981,
  author  = {Eiichi Bannai and N. J. A. Sloane},
  title   = {Uniqueness of Certain Spherical Codes},
  journal = {Canadian Journal of Mathematics},
  volume  = {33},
  number  = {2},
  pages   = {437--449},
  year    = {1981},
  doi     = {10.4153/CJM-1981-038-7},
  url     = {https://doi.org/10.4153/CJM-1981-038-7}
}

@article{CohnKumar2007,
  author  = {Cohn, Henry and Kumar, Abhinav},
  title   = {Universally Optimal Distribution of Points on Spheres},
  journal = {Journal of the American Mathematical Society},
  volume  = {20},
  number  = {1},
  pages   = {99--148},
  year    = {2007},
  doi     = {10.1090/S0894-0347-06-00546-7},
  url     = {https://arxiv.org/abs/math/0607446}
}

@incollection{BachocVenkov2001,
  author    = {Bachoc, Christine and Venkov, Boris},
  title     = {Modular Forms, Lattices and Spherical Designs},
  editor    = {Martinet, Jacques},
  booktitle = {R{\'e}seaux euclidiens, designs sph{\'e}riques
               et formes modulaires},
  series    = {Monographies de L'Enseignement Math{\'e}matique},
  volume    = {37},
  pages     = {87--111},
  publisher = {L'Enseignement Math{\'e}matique},
  address   = {Geneva},
  year      = {2001},
  url       = {https://www.unige.ch/math/EnsMath/monographies/monographie-no-37}
}
\end{document}